
\documentclass[12pt,arxiv, reqno]{ml-gen}

\usepackage{upref,mathrsfs,enumerate,bm} 


\usepackage[colorlinks=true,linkcolor=blue,citecolor=blue,backref=page]{hyperref}
\usepackage{local}

\opt{draft}{
\usepackage{showlabels}

\usepackage[displaymath,mathlines]{lineno}
\linenumbers
}


\usepackage{euler}


\tolerance=2000
\emergencystretch=20pt
\begin{document}
\title[Spectral flow, Dirac operators, local boundary conditions]{%
On the spectral flow for Dirac operators with local boundary conditions}

\author{Alexander Gorokhovsky}
\address{Department of Mathematics, UCB 395,
University of Colorado, Boulder,
CO 80309-0395, USA}
\email{Alexander.Gorokhovsky@colorado.edu}
\thanks{Partially supported by the NSF grant DMS-0900968}

\author{Matthias Lesch}
\address{Mathematisches Institut,
Universit\"at Bonn,
Endenicher Allee 60,
53115 Bonn,
Germany}
\email{ml@matthiaslesch.de, lesch@math.uni-bonn.de}
\urladdr{www.matthiaslesch.de, www.math.uni-bonn.de/people/lesch}
\thanks{Partially supported by the
        Hausdorff Center for Mathematics}

\subjclass[2010]{Primary 58J30; Secondary 58J35}
\keywords{Spectral flow, Dirac operator, heat trace, elliptic boundary
value problems}
\opt{draft,submit}{
\date{\today}
}
\begin{abstract}
Let $M$ be an even dimensional compact Riemannian manifold with boundary and
let $D$ be a Dirac operator acting on the sections of the Clifford module
$\cE$ over $M$.  We impose certain \emph{local} elliptic boundary conditions
for $D$ obtaining a selfadjoint extension $D_F$ of $D$. For a smooth
$U(n)$--valued function $g:M \to U(n)$ we establish a formula for the spectral
flow along the straight line between $D_F$ and $g\ii D_F g$. This spectral
flow is motivated by index theory: in odd dimensions it gives the natural
pairing between the $K$--homology class of the operator and the $K$--theory
class of $g$.

In our situation, with $\dim M$ having the ``wrong'' parity, the answer can be
expressed in terms of the natural spectral flow pairing on the
odd--dimensional boundary.

Our result generalizes a recent paper by M. Prokhorova \cite{pro} in which the
two-dimensional case is treated. Furthermore, our paper may be seen as an
odd-dimensional analogue of a paper by D. Freed \cite{fr}. As an application
we obtain a new proof of the cobordism invariance of the spectral flow.
\end{abstract}
\maketitle
\tableofcontents
%
\section{Introduction}\label{SecIntro}

Let $M$ be a compact even dimensional manifold with boundary. Let $D$ be a
Dirac operator acting on the sections of the Clifford module $\cE$ over $M$.
It is well-known that one can impose \emph{local} elliptic selfadjoint
boundary conditions for $D$. The boundary condition will be labeled by a
selfadjoint endomorphism $F \in \End_{\C(\pl M)}(\cE^0|_{\pl M})$ and the
corresponding realization of $D$ will be denoted by $D_F$.  Corresponding heat
kernel expansions have been considered in \cite{bragil}, and such boundary
conditions for Dirac operators were recently considered in \cite{cocham} to
construct the spectral action on a manifold with boundary.

Unlike the case of a closed manifold, $D_F$ is no longer an odd operator with
respect to the natural $\Z_2$--grading on $\cE$ and therefore one does not
have a natural index problem for this operator.

However, since the operator obtained is selfadjoint, there is a natural
spectral flow problem for this operator.  Namely, given a smooth function $g$
on $M$ with values in the unitary group $U(n)$ one can consider the spectral
flow between the operators $D\otimes \id$ and $g^{-1} (D \otimes \id) g$
acting on $L^2(M, \cE) \otimes \mathbb{C}^n$.  Equivalently, this spectral
flow equals the Toeplitz index $\ind(PgP)$, where $P$ is the positive spectral
projection of $D_F$ \cite{Baum82}, \cite[Sec. 3]{L}. For the case of
$2$-dimensional $M$ the spectral flow problem has been considered by
M.~Prokhorova in \cite{pro}. In this paper it is shown that the spectral flow
can be computed in terms of the winding numbers of $g$ at the boundary
components of $M$. In the paper \cite{kn} it was observed that one can reduce
the spectral flow problem to the index problem for the suspension of the
family. The latter index can be computed by the Atiyah-Bott index theorem
\cite{ab}.

The goal of the present paper is to extend the results of \cite{pro} to the
case of  arbitrary Dirac operators on compact even dimensional manifolds with
boundary.  Our main result Theorem \ref{ThmSF1} expresses the spectral flow of
the Dirac operator with local boundary conditions in terms of the spectral
flow of the boundary Dirac operator. As a consequence we obtain yet another
proof of the cobordism invariance of spectral flow. We note that even though
all the results are stated for Dirac operators, both the statements and the
proofs remain true for the more general first-order elliptic formally
self-adjoint operators.

In the even case, for the index of Dirac operators on odd-dimensional
manifolds, the corresponding result has been proved by D. Freed \cite{fr}.
Thus our paper can be considered as an odd analogue of one of D. Freed's
theorems. We note that the paper \cite{z} gives a heat equation proof of D.
Freed's theorem.

In the paper \cite{dz} a formula for the Toeplitz index for Dirac operators on
odd dimensional manifolds with global Atiyah-Patodi-Singer boundary conditions
is given. This problem is significantly different from the one we consider
here.

Our solution to the problem is based on the heat equation approach. We use the
heat equation expression for the spectral flow due to E.~Getzler \cite{G}.
Analytical formulas for spectral flow have received considerable attention in
the literature, see \cite{cp1, cp2, Carey2009}.

The paper is organized as follows. In section 2 we recall the main facts about
the spectral flow and quote Getzler's formula. Then, in section 3, we
introduce a class of local elliptic boundary value problems for Dirac
operators, state the spectral flow problem, our main result and a couple of
consequences. Finally, section 4 contains some heat kernel calculations on the
model cylinder $N\times [0,\infty)$. These are needed in the proof of Theorem
\ref{ThmSF1} to identify the ingredients in Getzler's formula in terms of
boundary data.

The authors are grateful to M.~Prokhorova for telling us about the results of
\cite{pro} and for many useful discussions. A.~Gorokhovsky is grateful to the
Hausdorff Center for Mathematics which supported his visit to Bonn. Finally,
we wish to thank the anonymous referee for thorough reading and helpful
comments.       
\section{Preliminaries}\label{SecPrelim}

For all notations and definitions related to Dirac operators and boundary
value problems our standard references are the books \cite{bgv}, \cite{bw}
and also the recent publication \cite{blz}.

\subsection{Spectral flow}\label{SSecSF}
Since the notion of spectral flow is central for this paper we recall here,
for the convenience of the reader, its definition.  Let $H$ be a separable
Hilbert space and denote by $\cf$ the set of closed self-adjoint Fredholm
operators equipped with the graph metric. For $T_1$, $T_2\in \cf$, the graph
distance is defined as
\begin{equation*}
    \|P_1 -P_2\|,
\end{equation*}
where $P_j$ is the projection onto the graph of $T_j$. An equivalent metric is
defined by
\begin{equation*}
\| (T_1+i)^{-1} - (T_2+i)^{-1} \|,
\end{equation*}
see \cite[Sec. 2]{L} for details.

Given a continuous path
\begin{equation*}
    f \colon[0, 1] \to \cf
\end{equation*}
one chooses a subdivision $0=t_0 <t_1 < \ldots <t_n=1$  of the interval
$[0,1]$ such that there exist $\varepsilon_j >0$, $j=1, \ldots, n$, satisfying
$\pm \varepsilon_j \notin \spec f(t)$ and
$[-\varepsilon_j, \varepsilon_j] \cap \specess f(t) = \emptyset$
for $t_{i-1} \le t \le t_j$. Then the \emph{spectral flow} of $f$ is defined by
\begin{equation}\label{DefSF}
\SF(f):= \sum_{j=1}^n \Bigl(\rank\bigl(\1_{[0,\varepsilon_j)}(f(t_j))\bigr)
  -\rank\bigl(\1_{[0, \varepsilon_j)}(f(t_{j-1}))\bigr)\Bigr).
\end{equation}
Here $\1_{[0, \varepsilon)}$ denotes the characteristic function of the
interval $[0, \varepsilon)$ and the operator $\1_{[0, \varepsilon_j)}(f(t_j))$ 
is defined via the functional calculus and therefore is equal to the
orthogonal projection onto the sum of the eigenspaces of $f(t_j)$
corresponding to the eigenvalues $\lambda \in [0, \varepsilon_j)$.  
It can be shown that the spectral flow thus defined depends only on the path
$f(t)$ and not on the choices made in the construction.

This definition essentially goes back to \cite{aps}. The approach we discuss
here appeared in the case of bounded operators  in \cite{P}, and in \cite{blp}
in the unbounded case. For a detailed historical discussion see
\cite[Introduction]{L}.

We emphasize that in this definition of $\SF$ it is not necessary that $f(0)$
and $f(1)$ are invertible.  It should be noted, however, that in the
literature there exist different conventions for dealing with the degenerate
case in which one or both endpoint values are not invertible.

Consider now a selfadjoint Fredholm operator $D \in \cf$. Let $g \in U(H)$ be
a unitary operator such that
\smallskip
\begin{enumerate}[\textup (i)]\topsep5em\itemsep0.4em
 \item\label{i} $g$ preserves $\Dom (D)$, the domain of
  $D$.\eqnnumlbl{itemi}
\item\label{ii} The commutator $[D, g]$ is bounded and relatively
 compact.\eqnnumlbl{itemii}
\end{enumerate}
\smallskip
In this case $t \mapsto (1-t) D +t g^{-1}D g = D+tg^{-1}[D,g]$ is a continuous
path in the graph topology and even in the stronger Riesz topology, see
\cite[Sec. 2]{L}. A systematic study of the spectral flow for such families of
operators, which are bounded perturbations of a fixed unbounded operator, was
given in \cite{bf98}.

For the sake of brevity we introduce the notation
\begin{equation}\label{EqSF}
\SF(D, g):= \SF ((D+tg^{-1}[D, g])_{0\le t \le1}).
\end{equation}

The homotopy invariance of the spectral flow \cite{blp}, Proposition 2.3
then implies
\begin{prop}\label{PropSF}
Let $D_s$, $0 \le s \le 1$, be a graph continuous family of selfadjoint
Fredholm operators. Furthermore, assume that $g_t$, $0 \le t \le 1$, is a
continuous family of unitary operators such that for any $0 \le s, t \le 1$,
the operator $g_t$ satisfies the conditions \eqref{itemi}, \eqref{itemii} above with
respect to $D_s$.  Assume also that $(s, t) \mapsto [D_s, g_t]$ is continuous
in the norm topology. Then
\begin{equation*}
  \SF(D_0, g_0) =\SF(D_1, g_1).
\end{equation*}
\end{prop}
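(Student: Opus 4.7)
The plan is to view $\SF(D_0, g_0)$ and $\SF(D_1, g_1)$ as the two vertical edges of a continuous square in $\cf$ and then apply the homotopy invariance of spectral flow. Concretely, I would set
\begin{equation*}
F(s, \tau) := D_s + \tau\, g_s^{-1}[D_s, g_s] = (1 - \tau) D_s + \tau\, g_s^{-1} D_s g_s,
\end{equation*}
so that $\tau \mapsto F(0, \tau)$ realizes $\SF(D_0, g_0)$ and $\tau \mapsto F(1, \tau)$ realizes $\SF(D_1, g_1)$. Hypothesis (i) makes $F(s,\tau)$ selfadjoint on the common domain $\Dom D_s = g_s^{-1}(\Dom D_s)$, while (ii) presents it as a bounded, $D_s$-relatively compact perturbation of $D_s$, hence selfadjoint Fredholm. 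Graph continuity of $s \mapsto D_s$, together with the norm continuity of $s \mapsto g_s$ and the norm continuity of $s \mapsto [D_s, g_s]$ (the diagonal $t=s$ case of the hypothesis), yield graph (in fact Riesz) continuity of $F$ on $[0,1]^2$.

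Next I would invoke the homotopy invariance of \cite[Prop.~2.3]{blp} in its form for paths with moving endpoints, equivalently the vanishing of spectral flow around the boundary of a continuous square in $\cf$. Tracing the boundary of $[0,1]^2$ produces the identity
\begin{equation*}
\SF(D_1, g_1) - \SF(D_0, g_0) = \SF\bigl((g_s^{-1} D_s g_s)_{s\in[0,1]}\bigr) - \SF\bigl((D_s)_{s\in[0,1]}\bigr),
\end{equation*}
so the claim reduces to showing that the right-hand side vanishes.

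For this last point, at each $s$ the operator $g_s^{-1} D_s g_s$ is unitarily equivalent to $D_s$ and so has the same spectrum, with multiplicities. Any subdivision $\{s_j\}$ with constants $\{\varepsilon_j\}$ for which \eqref{DefSF} computes $\SF((D_s)_s)$ is simultaneously admissible for $(g_s^{-1} D_s g_s)_s$, and the ranks $\rank\bigl(\1_{[0,\varepsilon_j)}(g_{s_j}^{-1} D_{s_j} g_{s_j})\bigr) = \rank\bigl(\1_{[0,\varepsilon_j)}(D_{s_j})\bigr)$ agree termwise, so the two spectral flows coincide. The main obstacle is securing the uniform graph continuity of $F$ on the whole square; once that is in place, both the homotopy-invariance step and the conjugation-invariance step follow essentially formally.
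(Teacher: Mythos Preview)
Your proof is correct and follows essentially the same strategy as the paper: build a continuous two-parameter family in $\cf$, invoke homotopy invariance so that the spectral flow around the boundary of the square vanishes, and cancel the two horizontal edges using that $g_s^{-1}D_s g_s$ and $D_s$ are unitarily conjugate and hence isospectral. The paper phrases this as the concatenation of four explicit paths being null-homotopic inside a contractible cube $(u,s,t)\mapsto D_s+u\,g_t^{-1}[D_s,g_t]$, but the mechanism is identical to yours (and your use of only the diagonal $t=s$ of the commutator hypothesis is all that is actually needed).
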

\begin{proof} The map $(u, s, t) \mapsto D_s+ug_t^{-1}[D_s, g_t]\in \cf$ is
continuous in the graph topology.  Since $g_t^{-1} [D_s, g_t]$ is
$D_s$--compact, $D_s+ u g_t^{-1} [D_s,g_t]$ is indeed a Fredholm operator.
The closed path which is the concatenation of the paths
\begin{align*}
   f_1(u) &= D_u, \\
   f_2(u) &= D_1+ u g_1^{-1} [D_1,g_1], \\
   f_3(u) &= D_{1-u}+ g_{1-u}^{-1} [D_{1-u},g_{1-u}] & 0\le u\le 1, \\
          &= g_{1-u}^{-1} D_{1-u} g_{1-u}, \\
   f_4(u) &= D_0 + (1-u) g_0^{-1}[D_0, g_0],
\end{align*}
is homotopic to the constant path $D_0$ (since the cube $0\le u,s,t\le 1$
is contractible). By homotopy invariance, the total spectral flow over
this closed path vanishes. The path additivity of the spectral flow
yields
\[
\SF(f_1)+\SF(f_2)+\SF(f_3)+\SF(f_4)=0.
\]
From the definition \eqref{DefSF} \mpar{such labels
are a nuisance!!!} we infer
\[
\SF(f_1)+\SF(f_3)=0.
\]
Furthermore, $\SF(f_2)=\SF(D_1,g_1)$ and $\SF(f_4)=-\SF(D_0,g_0)$. The
proposition now follows.
\end{proof}

The following result of E.~Getzler \cite{G} computes the spectral flow in heat
equation terms.
\begin{theorem}[Getzler]\label{ThmGetzler}
Let $D$ be a selfadjoint operator on a Hilbert space
$H$ such that $e^{-\eps D^2}$ is trace class for any $\eps >0$. Furthermore,
let  $g \in U(H)$ be such that \eqref{itemi} and \eqref{itemii} are fulfilled. Then  for any $\eps >0$
 \begin{equation}\label{Getzler}
\SF(D, g)
   =\sqrt{\frac{\eps}{\pi}} \int \limits_0^1 \Tr\bigl( g^{-1}[D, g] e^{-\eps
   D(u)^2}\bigr)\ du,
\end{equation}
where $D(u)= (1-u)D +u g^{-1}D g = D+u g^{-1} [D, g]$.
\end{theorem}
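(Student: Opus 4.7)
The plan is to prove Getzler's formula \eqref{Getzler} in two stages: first establish that the right-hand side $R_\varepsilon$ is independent of $\varepsilon > 0$, and then identify its common value with $\SF(D, g)$ using homotopy invariance together with a model calculation. Throughout, the crucial input is that $[D, g]$ is $D$-compact, so each operator $D(u) = D + u g^{-1}[D, g]$ is selfadjoint Fredholm with $e^{-\varepsilon D(u)^2}$ trace class.

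For the $\varepsilon$-independence, write $\dot D(u) = g^{-1}[D, g]$ and combine the elementary identity
\begin{equation*}
\partial_\varepsilon \bigl(\sqrt{\varepsilon/\pi}\, e^{-\varepsilon A^2}\bigr) = \tfrac{1}{2\sqrt{\pi \varepsilon}} e^{-\varepsilon A^2} - \sqrt{\varepsilon/\pi}\, A^2 e^{-\varepsilon A^2}
\end{equation*}
with the Duhamel formula
\begin{equation*}
\partial_u e^{-\varepsilon D(u)^2} = -\varepsilon \int_0^1 e^{-s\varepsilon D(u)^2} \bigl(D(u)\dot D(u) + \dot D(u) D(u)\bigr) e^{-(1-s)\varepsilon D(u)^2}\, ds.
\end{equation*}
After integration by parts in $u$ and repeated use of cyclicity of the trace, $\partial_\varepsilon R_\varepsilon$ collapses to a total $u$-derivative whose endpoint contributions cancel because $D(1) = g^{-1} D g$ is unitarily conjugate to $D(0) = D$. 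Applying the same machinery along a continuous family $(D_s, g_s)$ further establishes that $R_\varepsilon$ is invariant under norm-continuous homotopies of $(D, g)$ respecting \eqref{itemi}--\eqref{itemii}, paralleling Proposition \ref{PropSF}.

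Since both $\SF(D, g)$ and $R_\varepsilon$ are homotopy invariants and additive under orthogonal direct sums, it then suffices to verify the formula on a generating example. For the canonical model $D = -i \partial_\theta + c$ on $L^2(S^1)$ with $g(\theta) = e^{ik\theta}$, one has $\SF(D, g) = k$, and the direct Gaussian computation
\begin{equation*}
R_\varepsilon = k \sqrt{\tfrac{\varepsilon}{\pi}} \sum_{n \in \mathbb{Z}} \int_0^1 e^{-\varepsilon(n + c + ku)^2}\, du = k \sqrt{\tfrac{\varepsilon}{\pi}} \int_{-\infty}^\infty e^{-\varepsilon v^2}\, dv = k
\end{equation*}
confirms the formula. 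The main obstacle is the rigorous justification of the $\varepsilon$-independence step: legitimizing Duhamel's formula for unbounded $D(u)$, controlling trace-class norms via the smoothing of $e^{-s\varepsilon D(u)^2}$ combined with the $D$-compactness of $[D, g]$, and justifying the interchange of the $\varepsilon$-derivative with the $u$-integral and trace. A secondary difficulty in the reduction to the model is arguing, via the allowed homotopies, that a general $(D, g)$ can be brought into the required form plus an invertible summand, which is essentially a $K^1$-generation argument.
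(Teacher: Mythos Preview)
The paper does not prove this theorem at all: it is quoted as a result of Getzler \cite{G}, and the only argument the paper supplies is the Remark immediately following the statement, which reduces the general case to the case of invertible $D$ by the continuity trick $t\mapsto D_t=D-t$. So there is essentially nothing to compare your proposal to within the paper itself; you are reconstructing a proof that the authors deliberately outsource.

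That said, a comment on your outline. Your Step~1 ($\varepsilon$--independence via Duhamel and cyclicity, with the endpoint terms cancelling by unitary conjugacy) is the standard heart of Getzler's argument and is fine modulo the trace--class bookkeeping you flag. The weak point is your Step~3. In the abstract setting of the theorem---a selfadjoint $D$ on an arbitrary Hilbert space with trace--class heat semigroup and a unitary $g$ satisfying \eqref{itemi}, \eqref{itemii}---there is no ready--made ``$K^1$--generation'' statement that lets you deform an arbitrary pair $(D,g)$ to copies of $(-i\partial_\theta, e^{ik\theta})$ on $L^2(S^1)$ plus an invertible remainder while staying inside the class where both $\SF$ and $R_\varepsilon$ are defined and invariant. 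You would need a classification of such pairs up to the homotopies you allow, which is substantially harder than the theorem you are trying to prove. Getzler's own route (and the later treatments in \cite{cp1,cp2,Carey2009}) avoids this entirely: after establishing $\varepsilon$--independence one identifies $R_\varepsilon$ with the spectral flow directly, either by letting $\varepsilon\to\infty$ and reading off the eigenvalue crossings, or by recognizing the integrand as the $u$--derivative of a truncated $\eta$--function and tracking the integer jumps. That is both simpler and stays within the abstract hypotheses.
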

\begin{remark}
 1. The formula \eqref{Getzler} is derived in \cite{G} under the assumption
that $D$ is invertible. It is true without this assumption as well. Indeed,
assuming this result for invertible $D$, consider for general $D$ the
function    $\mathbb{R} \ni t \mapsto f(t)=\sqrt{\frac{\eps}{\pi}} \int
\limits_0^1 \Tr\bigl( g^{-1}[D_t, g] e^{-\eps D_t(u)^2}\bigr)\ du$, where $D_t = D-t$.
This function  takes the value $\SF(D-t, g)= \SF(D, g)$, cf. Prop.
\ref{PropSF}, when $t$ is in the
complement of the spectrum of $D$. On the other hand    $ e^{-\eps
D_t(u)^2}$ is continuous as a function of $t$ and bounded as a function of  $u
\in [0, 1]$ in the space of trace class operators. Therefore $f(t)$ is
continuous and hence constant.

2. Getzler's formula has been generalized considerably. See
\cite{Carey2009} and the references therein.
\end{remark}
\section{The main results}\label{SecMain} 

\subsection{Boundary conditions for Dirac operators}\label{SSecBCDO} 

We consider an even dimensional compact Riemannian manifold $(M,g)$ with
boundary $\pl M$.  Furthermore, let $D$ be a Dirac operator acting on the
$\mathbb{Z}_2$--graded Clifford module $\cE = \cE^0 \oplus
\cE^1$ over $M$; that is $\cE$ is a module over the Clifford
algebra $\C(M)=C\ell(TM)$ and $D$ is an odd parity (with respect to the
$\mathbb{Z}_2$--grading) first order selfadjoint elliptic differential
operator satisfying
\begin{equation}
    D(f\cdot s) =\sfc(df)\cdot s + f\cdot Ds,\quad\text{for } f\in C^\infty(M),
    s\in \Gamma^\infty(\cE).
\end{equation}
Let $\bm{n}$ be the (inward) normal vector field at the boundary. Then
Clifford multiplication by $\bm{n}$ induces an isomorphism $J= \sfc(\bm{n})
\colon \cE^0|_{\pl M} \to \cE^1|_{\pl M}$.  Both
$\cE^0|_{\pl M}$ and $\cE^1|_{\pl M}$ become modules
over the Clifford algebra $\C(\pl M)$ by putting for $v \in
\Gamma^\infty(T\pl M)$
\begin{equation}
 \sfc_\cE(v) = \sfc(\bm{n}) \circ \begin{bmatrix}
                              \sfc_{\cE^0}(v) &0 \\
                                  0           &\sfc_{\cE^1}(v)
                            \end{bmatrix}.
\end{equation}
We denote by $D^\pl$ the corresponding Dirac operator on $\cE^0|_{\pl M}$.

We define \emph{local boundary conditions} for $D$ as follows:
For an invertible element  $F \in \End_{\C(\pl M)}(\cE^0|_{\pl M})$
put
\begin{equation}\label{EqBC}
\Dom(D_F) =
\bigsetdef{ f^0 \oplus f^1 \in \Gamma^\infty(\cE)}{%
   f^1|_{\pl M} =  J F f^0|_{\pl M} }.
\end{equation}
\newcommand{\ShapLop}{{\v S}apiro-Lopatinski{\v i}}
 \begin{prop}\label{PropF}
  \begin{enumerate}[\upshape a)]
  \item $D_F$ with the boundary condition \eqref{EqBC} is locally elliptic
   in the sense of \ShapLop.
  \item If $F^*=F$ then $D_F$ is selfadjoint.
  \item \label{PropFc} If $M$ is connected and if $F\ge 0$ or $F \le 0$ then $D_F$ is invertible.
\end{enumerate}
\end{prop}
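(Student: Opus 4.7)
The plan is to treat parts (a)--(c) in sequence: (a) is a symbol-level computation, (b) follows from Green's formula once (a) is in hand, and (c) combines Green's formula with unique continuation.

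\textbf{Part (a).} In a collar $[0,\varepsilon) \times \pl M$ with inward normal coordinate $t$, the Dirac operator admits the standard factorization $D = J(\pl_t + A_t + R_t)$ where $A_t$ is a smooth family of first-order self-adjoint elliptic tangential operators on $\pl M$ and $R_t$ is of order zero. To verify the \ShapLop\ condition at $(x_0, \xi) \in T^*\pl M \setminus 0$, I freeze coefficients at $t=0$ and examine the symbol ODE on $[0,\infty)$:
\[
\pl_t f(t) + a(\xi) f(t) = 0, \qquad a(\xi) := i \sigma_1(A_0)(x_0, \xi).
\]
The Clifford relations force $a(\xi)$ to be a self-adjoint endomorphism of $\cE_{x_0}$ with $a(\xi)^2 = |\xi|^2 \id$, so $\cE_{x_0} = E_+(\xi) \oplus E_-(\xi)$ splits orthogonally into half-dimensional $\pm|\xi|$-eigenspaces, and $J$ interchanges $E_+(\xi)$ and $E_-(\xi)$. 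The $L^2$-bounded solutions on $[0,\infty)$ are exactly those with initial value in $E_+(\xi)$. The \ShapLop\ condition then requires that $(f^0, f^1) \mapsto f^1 - JFf^0$ restrict to an isomorphism $E_+(\xi) \to \cE^1_{x_0}$, equivalently that $\cE_{x_0}$ decompose as a direct sum of $E_+(\xi)$ and the graph of $JF$. Using that $J$ exchanges $E_\pm(\xi)$ and that $F$ is invertible, this transversality is an elementary linear-algebra check in each fibre.

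\textbf{Part (b).} Green's formula for $D$ states that, for $u, v \in \Gamma^\infty(\cE)$,
\[
(Du, v)_{L^2(M)} - (u, Dv)_{L^2(M)} = - \int_{\pl M} (\sfc(\bm{n}) u, v)_{\cE}\, dS.
\]
For $u, v \in \Dom(D_F)$ the boundary conditions $u^1 = JFu^0$ and $v^1 = JFv^0$ hold at $\pl M$. Decomposing $\sfc(\bm{n}) u = Ju^0 + Ju^1$ and using $J^* = -J$ together with $J^2 = -\id$ (so $J^*J = \id$), a direct computation gives
\[
(\sfc(\bm{n}) u, v)_{\cE}\big|_{\pl M} = (u^0, F v^0) - (F u^0, v^0),
\]
which vanishes identically when $F = F^*$. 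Thus $D_F$ is symmetric on $\Dom(D_F)$; combined with the \ShapLop\ ellipticity from (a), standard elliptic theory for boundary value problems upgrades this to self-adjointness on the natural $H^1$-domain.

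\textbf{Part (c).} Let $\Gamma \in \End(\cE)$ denote the grading operator, $\Gamma|_{\cE^i} = (-1)^i$; it is self-adjoint and satisfies $D\Gamma = -\Gamma D$. Suppose $u \in \Dom(D_F)$ with $Du = 0$. Since $D$ is odd, $D^0 u^0 = 0$ and $D^1 u^1 = 0$ separately. Applying Green's formula with $v = \Gamma u$ and using $(u, D\Gamma u) = -(u, \Gamma D u) = 0$ yields
\[
0 = (Du, \Gamma u) - (u, D\Gamma u) = - \int_{\pl M}(\sfc(\bm{n}) u, \Gamma u)_{\cE}\, dS.
\]
A pointwise computation using the boundary condition gives $(\sfc(\bm{n}) u, \Gamma u)_{\cE} = -2 (u^0, F u^0)$, so
\[
\int_{\pl M}(u^0, F u^0)\, dS = 0.
\]
When $F \ge 0$ and invertible (hence $F > 0$), positivity forces $u^0|_{\pl M} = 0$, and the boundary condition then forces $u^1|_{\pl M} = 0$. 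Iterating $Du = 0$ in the collar form $D = J(\pl_t + A_t + R_t)$ shows $u$ vanishes to infinite order at $\pl M$; equivalently, the extension of $u$ by zero across $\pl M$ is a weak solution of the elliptic equation on a slightly larger manifold. Aronszajn-type unique continuation applied to the second-order elliptic operator $D^2$ then gives $u \equiv 0$ on the component of $M$ adjacent to $\pl M$, and connectedness of $M$ concludes. The case $F \le 0$ is identical (replace $F$ by $-F$).

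\textbf{Main obstacle.} The principal technical step is the symbol computation in part (a): identifying $E_+(\xi)$ explicitly and verifying its transversality to the graph of $JF$, with all sign conventions consistent. Parts (b) and (c) are then routine applications of Green's formula together with standard elliptic theory and unique continuation.
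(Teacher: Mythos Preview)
Your arguments for parts (b) and (c) are correct and follow the same ideas as the paper, with only cosmetic differences: in (b) the paper computes the adjoint domain directly from Green's formula (showing $t\in\Dom(D_F^*)$ iff $t^1|_{\pl M}=JF^*t^0|_{\pl M}$) rather than arguing symmetry-plus-ellipticity, and in (c) the paper applies Green's formula to the pair $s=f^0\oplus 0$, $t=0\oplus f^1$ rather than to $u$ and $\Gamma u$; both routes yield $\int_{\pl M}\langle u^0, Fu^0\rangle=0$ and then invoke unique continuation.

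Part (a), however, has a genuine gap. You set up the symbol ODE correctly and reduce the \ShapLop\ condition to a transversality statement, but then assert that ``using that $J$ exchanges $E_\pm(\xi)$ and that $F$ is invertible, this transversality is an elementary linear-algebra check.'' These two facts alone are \emph{not} sufficient: you never use the hypothesis $F\in\End_{\C(\pl M)}(\cE^0|_{\pl M})$, and without it the claim is false. Concretely, after passing to the product form so that the boundary condition reads $f^1=Ff^0$ on $\cE^0\oplus\cE^0$, the Cauchy data of decaying solutions at $\xi$ is $W_+(\xi)\oplus W_-(\xi)$, where $W_\pm(\xi)\subset\cE^0_{x_0}$ are the $\pm|\xi|$-eigenspaces of the boundary symbol $\sigma_1(D^\pl)(\xi)$. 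Transversality to the graph of $F$ then amounts to: $v\in W_+(\xi)$ and $Fv\in W_-(\xi)$ force $v=0$. If $F$ commutes with the boundary Clifford action it preserves each $W_\pm(\xi)$ and the implication is immediate from invertibility; but for, say, $\cE^0_{x_0}=\mathbb{C}^2$ with boundary symbol $\left(\begin{smallmatrix}1&0\\0&-1\end{smallmatrix}\right)$ and $F=\left(\begin{smallmatrix}0&1\\1&0\end{smallmatrix}\right)$ (invertible, self-adjoint, but not Clifford-commuting), the vector $v=e_1$ violates it and the \ShapLop\ condition fails. The paper's proof uses precisely this hypothesis: it splits $\cE^0|_{\pl M}$ into the positive and negative spectral subbundles of $F$, which are Clifford submodules exactly because $F\in\End_{\C(\pl M)}$, and then applies the known sign-definite case from \cite{blz} to each summand. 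Your sketch needs the same ingredient: once you note that $F$ commutes with $\sigma_1(D^\pl)(\xi)$, the ``elementary check'' really is elementary.
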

\begin{proof}
Note first that, by the Clifford relations, $J$ is unitary and $J^2=-I$.

a) By \cite[Sec. 4.2]{blz} the \ShapLop\ condition is satisfied
if $F=J^tJ F$ is $>0$ or $<0$. However, since
$F\in\End_{\C(\pl M)}(\cE^0|_{\pl M})$ the splitting of
$\cE^0|_{\pl M}$ into the positive/negative spectral subbundles of $F$
gives a splitting of the boundary value problem in a collar neighborhood
of the boundary into a direct sum of two problems to which
\cite[Prop. 4.3]{blz} applies. For more details, see
\cite[Sec. 4.2]{blz}.

b) Given $s\in\Dom(D_F)$ and $t\in\Gamma^\infty(\cE)$. Green's formula
gives
\begin{equation}\label{EqGreen}
\langle Ds, t \rangle -\langle s, Dt \rangle
    = -\langle Js|_{\pl M}, t|_{\pl M}  \rangle
    = \langle s^0|_{\pl M}, J t^1|_{\pl M} + F^* t^0|_{\pl M}\rangle.
\end{equation}
Hence $t\in\Dom(D_F^*)$ iff $t^1|_{\pl M}= J F^* t^0|_{\pl M}$. Together
with a) it implies that $D_F=D_F^*$ iff $F=F^*$.

c) It is enough to show that $\Ker D_F =0$. Assume
$D_F f=0$, $f = f^0\oplus f^1$.
Choosing in Green's formula \eqref{EqGreen} $s= f^0\oplus 0$, $t=0\oplus f^1$
and using $f^1|_{\pl M} =  J F f^0|_{\pl M}$ we have
\begin{equation*}
   0 = \langle J  f^0|_{\pl M}, J F f^0|_{\pl M} \rangle
     = \langle   f^0|_{\pl M},    F f^0|_{\pl M} \rangle.
\end{equation*}
Positivity or negativity of $F$ implies that $f^0|_{\pl M}=0$,
and hence $f^1|_{\pl M}=0$. The Unique Continuation Property (\UCP)
for Dirac operators \cite[Sec. 9]{bw} now implies that $f=0$.
\end{proof}
\begin{remark}\label{rem2}
The statement of this Proposition with the same proof holds for more general
first order elliptic differential operators (for part c) one needs to assume
\UCP). In particular it holds for the operators of the
form $D + A$ where $A$ is an odd, selfadjoint endomorphism of $\cE$.
\end{remark}
\subsection{Main results}\label{SSMainResults} 

We continue in the notations of the previous subsection. Let $F \in
\End_{\C(\pl M)}(\cE^0|_{\pl M})$ be a selfadjoint
invertible element defining boundary conditions \eqref{EqBC}
for $D$. $D_F$ then is a selfadjoint realization of an elliptic boundary
value problem.

Let $P^+$ (respectively $P^-$) be the projection onto the positive 
(resp.~negative) eigenbundles of $F$; $P^\pm \in \End_{\C(\pl
M)}(\cE^0|_{\pl M})$.  Let $\cF^\pm = P^\pm(\cE^0)$. Then
$\cF^+\oplus \cF^- = \cE^0|_{\pl M}$
and $\cF^\pm$ are Clifford submodules of
$\cE^0|_{\pl M}$. Let $B^\pm$ be the corresponding Dirac
operators on  $\cF^\pm$. With $D^\pl$  the Dirac operator on $\cE^0|_{\pl M}$ \mpar{$\clubsuit$}
\begin{equation} B^\pm = P^\pm \circ D^\pl \circ P^\pm
\end{equation}
Set $B=B^+ \oplus B^-$, then
$D^\pl- B$ is a $0$-order operator.

Let $g \in C^\infty(M, U(n))$. Since the boundary condition of $D_F$ is local,
multiplication by $g$ preserves the domain of $D_F\otimes \id$ and hence $g$
satisfies (i) and (ii) on page \pageref{i} and thus the spectral flow
$\SF(D_F\otimes \id,g)$, cf. \eqref{EqSF}, is well--defined.  The analogous
problem for nonlocal boundary conditions is more subtle since then
multiplication by $g$ does not preserve the domain, see \cite{dz}.

Here, $D_F \otimes \id$ is acting on $\cE\otimes \mathbb{C}^N$.
By slight abuse of notation we will write again $D_F$ for
$D_F \otimes \id$ and use the analogous  convention for $B^\pm$, etc. \mpar{$\clubsuit$}

The following Theorem expresses $\SF(D_F,g)$ in terms of boundary data:
\begin{theorem}\label{ThmSF1}
\mpar{$\clubsuit$} Let $D_F$ be the Dirac operator on $M$ with the boundary conditions \eqref{EqBC} and let $B^\pm$
be the operators on $\pl M$ defined in \eqref{DefB}. Then
\begin{equation*}\label{DefB}
 \SF(D_F, g) = \frac{1}{2}\bigl( \SF(B^+, g|_{\pl M})-\SF(B^-, g|_{\pl
 M})\bigr).
\end{equation*}
\end{theorem}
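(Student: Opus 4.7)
The plan is to compute $\SF(D_F,g)$ via Getzler's heat equation formula (Theorem \ref{ThmGetzler}) and to extract the boundary contribution in the small-time limit using the cylinder model of Section 4. By Getzler, for every $\eps>0$,
\begin{equation*}
\SF(D_F,g) = \sqrt{\eps/\pi}\int_0^1 \Tr\bigl(g^{-1}[D_F,g]\, e^{-\eps D_F(u)^2}\bigr)\,du,
\end{equation*}
and since the left-hand side is independent of $\eps$, the right-hand side equals its $\eps\to 0^+$ limit. Note that $V := g^{-1}[D_F,g] = g^{-1}\sfc(dg)$ is a bounded, Clifford-odd bundle endomorphism of $\cE$, and each $D_F(u) = D_F + uV$ is a selfadjoint realization of the same elliptic boundary value problem.

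To reduce to a concrete model, I would first use homotopy invariance (Proposition \ref{PropSF}) to deform the Riemannian metric and the Clifford module on $M$ to a product structure in a collar neighborhood of $\pl M$, without changing $\SF(D_F,g)$ or $\SF(B^\pm, g|_{\pl M})$. Next I would apply the standard heat kernel expansion for local elliptic boundary value problems: with $n=\dim M$ even, $\Tr(V e^{-\eps D_F(u)^2})\sim \sum_{k\ge 0}\eps^{(k-n)/2}\alpha_k(u)$, where $\alpha_{2j}$ contain interior integrals over $M$ and $\alpha_{2j+1}$ are pure boundary integrals. Multiplication by $\sqrt{\eps}$ shifts all exponents by $\tfrac12$; for even $n$ the constant coefficient in $\eps$ comes from $k=n-1$, an odd index, which is a \emph{boundary} contribution. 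The interior terms thus drop out in the $\eps\to 0$ limit, mirroring the well-known vanishing of $\SF(D,g)$ for Dirac operators on closed even-dimensional manifolds.

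Then I would use the explicit half-cylinder heat kernel constructed in Section 4 to identify the surviving boundary term. Since $F$ is selfadjoint with spectral decomposition $\cE^0|_{\pl M} = \cF^+\oplus\cF^-$, the boundary condition $f^1 = JFf^0$ splits the half-cylinder problem into two independent problems on $\cF^\pm$; in each the normal variable can be treated by the image method, and the restriction of the resulting kernel to $\{r=0\}$ reduces to a multiple of the heat kernel of $B^\pm$ on $\pl M$. The normal integration produces a factor of $1/2$, while the opposite signs of $F$ on $\cF^+$ and $\cF^-$ translate into opposite signs of the boundary contribution, yielding
\begin{equation*}
\lim_{\eps\to 0^+}\sqrt{\eps/\pi}\,\Tr\bigl(Ve^{-\eps D_F(u)^2}\bigr) = \tfrac12\lim_{\eps\to 0^+}\sqrt{\eps/\pi}\bigl(\Tr_{\pl M}(g^{-1}[B^+,g]e^{-\eps B^+(u)^2}) - \Tr_{\pl M}(g^{-1}[B^-,g]e^{-\eps B^-(u)^2})\bigr).
\end{equation*}
Integrating in $u$ and applying Getzler's formula on the closed odd-dimensional manifold $\pl M$ then yields the claimed identity.

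The main obstacle is the half-cylinder heat kernel identification: one must track carefully how the local boundary condition, which couples $\cE^0$ and $\cE^1$ through $J=\sfc(\bm n)$, translates under the spectral splitting of $F$ into Dirichlet/Neumann-type conditions with the correct sign on each $\cF^\pm$, and how the boundary trace of $V\,e^{-\eps D_F(u)^2}$ in the normal direction reproduces exactly the Getzler integrand for $B^\pm$. The $u$-dependence is harmless since the perturbation $uV$ is zero-order and affects neither the boundary condition nor the leading heat kernel asymptotics. Once the cylinder identity is in hand, the rest of the proof --- Getzler's formula, homotopy invariance, small-time limit, and interior parity vanishing --- is essentially assembly.
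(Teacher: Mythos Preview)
Your proposal is correct and takes essentially the same route as the paper: Getzler's formula, homotopy reduction to a product collar, vanishing of the interior contribution by $\Z_2$-oddness of $g^{-1}[D,g]\,e^{-\eps D(u)^2}$, and the explicit half-cylinder computation of Section~\ref{SecHK}. The one step you gloss over, and which the paper carries out explicitly, is that the homotopy must also deform $F$ to $\tF = P^+ - P^-$ (and $g$ to be $x$-independent near $\pl M$), since the cylinder model of Section~\ref{SecHK} requires $F^2=\id$ and $[B,F]=0$; without this normalization your assertion that the half-cylinder problem ``splits into two independent problems on $\cF^\pm$'' does not hold as stated.
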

\begin{proof}
We first show that we can reduce the computation of the spectral flow to the
situation where all structures are product near the boundary.  Our proof
relies on the results from \cite{blz}, and the argument is very similar to the
one in \cite{pro}. We then apply Getzler's formula \eqref{Getzler}. This
application uses standard calculations of the heat kernel on
the cylinder, which we relegate to the Section \ref{SecHK}.

Let $r$, $h$ be the Riemannian metric on $M$ and Hermitian metric on $\cE$
respectively.  Fix a collar neighborhood $N_0$ of the boundary, and fix
an identification $N_0\cong \pl M \times[0, \delta)$ using the exponential
mapping, so that $r( \frac{\pl}{\pl x}, \frac{\pl}{\pl x})=1$, $x \in [0,
\delta)$.  Let $r_1$ denote the  Riemannian metric on $M$ which is product on
$N_0$ and coincides with $r$ on $\pl M$.

We fix an isomorphism $\cE^0  \cong \pi^* \cE^0|_{\pl M}$ and let $h_1$ denote
the  Hermitian metric on $\cE^0$ which makes this isomorphism an isometry
over $N_0$.  Extend $h_1$ to $\cE^1$ so that $J= \sfc(\frac{\pl}{\pl x})$ is an
isometry.

One can construct, see \cite {blz}, an invertible even parity section 
$\Psi \in \Gamma^\infty(M,\End(\cE))$, which induces a unitary isomorphism
$L^2(M, \cE, r, h) \to L^2(M, \cE, r_1, h_1)$ and such that $\Psi|_{\pl M}=\id$.

We can use $J= \sfc(\frac{\pl}{\pl x})$ to identify $\cE$ with
$E := \cE^0\oplus \cE^0$ near the boundary via the map
$\cE \ni f^0 \oplus f^1 \mapsto f^0 \oplus (-J f^1) \in E$.
Under these identifications $\Psi \circ D \circ \Psi^{-1}$
can be written on $\pl M \times [0, \delta)$ as
\begin{equation}
 \begin{split}
  \Psi \circ D \circ \Psi^{-1}
    &= \begin{bmatrix} 0 &-1\\ 1 & 0 \end{bmatrix}
      \left(\frac{d}{dx} \otimes \id +\begin{bmatrix} D_x^\pl& 0 \\ 0
       &-D_x^\pl \end{bmatrix}\right) \\
    &= \begin{bmatrix}0 &-\frac{d}{dx}+D_x^\pl \\
          \frac{d}{dx}+D_x^\pl & 0 \end{bmatrix},
 \end{split}
\end{equation}
where $D_x^\pl$, $x \in [0, \delta)$, is a
smooth family of elliptic first order operators on $\pl M$ acting on the
sections of $\cE^0$ with $D_0^\pl =D^\pl$.

The boundary conditions \eqref{EqBC} for $\Psi \circ D \circ \Psi^{-1}$
under this identification take the form:
\begin{equation*}
 \bigsetdef{f^0 \oplus f^1 \in \Gamma^\infty(E)}{f^1|_{\pl M} =   F f^0|_{\pl M}}.
\end{equation*}

We can then construct a smooth family of elliptic self-adjoint boundary value problems
connecting the pair $(\Psi \circ D \circ \Psi^{-1}, F)$   to the pair of the form
$(\tD, {\tF})$ where
\begin{equation}
\tD  = \begin{bmatrix}0 &-\frac{d}{dx}+B \\ \frac{d}{dx}+B &0 \end{bmatrix},
\end{equation}
on $N_0$ and ${\tF} = P^+ -P^-$. Notice that by the Theorem 7.16 of \cite{blz} the corresponding
family of operators is continuous in the graph topology. We can also smoothly deform $g$ in the class
of smooth $U(n)$-valued maps to such an element $\tg$ which is independent of $x$ near $\pl M$ and such that
$\tg|_{\pl M} = g$.

Then $\SF(D_F, g) = \SF ((\Psi \circ D_F \circ \Psi^{-1}), g) =
\SF (\tD_{{\tF}}, g) = \SF( \tD_{{\tF}}, \tg)$
due to Proposition \ref{PropSF}.

Let $\mu_0, \lambda_0 \in C_0^\infty(N_0)$, $0 \le \mu_0, \lambda_0 \le 1$ be
cut-off functions such that $\mu_0$ is equal to $1$ near $\pl M$ and $\lambda_0$ 
is equal to $1$ in a neighborhood of $\supp \mu_0$. Let $\mu_1 =1 -\mu_0$ and 
$\lambda_1 \in C_0^\infty(M\setminus \pl M) $ be equal to $1$ in a neighborhood 
of $\supp \mu_1$. Then, using $\lambda_0\mu_0 = \mu_0$, $\lambda_1\mu_1 = \mu_1$,
we obtain for any trace class operator $K$ with smooth kernel on $M\times M$
the identity
\begin{equation*}
  \begin{split}
    \Tr\bigl( K \bigr) & = \Tr\bigl( \mu_0 K\bigr) + \Tr\bigl( \mu_1 K \bigr),\\
       & =  \Tr\bigl(\lambda_0 \mu_0 K\bigr) + \Tr\bigl(\lambda_1 \mu_1 K \bigr),\\
       & = \Tr\bigl( \lambda_0 K \mu_0 \bigr) + \Tr\bigl( \lambda_1 K \mu_1 \bigr).\\
  \end{split}
\end{equation*}
We emphasize that this is an exact formula. Specializing to 
$K= \tg^{-1}[\tD_{\tF}, \tg] e^{-\eps \tD_{\tF}(u)^2}$ we find 
\begin{multline*}
\sqrt{\frac{\eps}{\pi}} \int \limits_0^1
   \Tr\bigl( \tg^{-1}[\tD_{\tF}, \tg] e^{-\eps \tD_{\tF}(u)^2}\bigr) \ du \\
   =
\sqrt{\frac{\eps}{\pi}}
\int \limits_0^1 \Tr\bigl( \lambda_0 \tg^{-1}[\tD_{\tF}, \tg] e^{-\eps
\tD_{\tF}(u)^2} \mu_0\bigr)\ du\\
+ \sqrt{\frac{\eps}{\pi}} \int \limits_0^1 \Tr\bigl( \lambda_1 \tg^{-1}[\tD_{\tF}, \tg]
e^{-\eps \tD_{\tF}(u)^2} \mu_1\bigr)\ du.
\end{multline*}
By standard off-diagonal decay estimates for the heat kernel, cf. \cite[Sec. 3]{Lesch2013}, the first
term can asymptotically, i.~e. up to $O(\eps^\infty)$, be computed on the model cylinder.
This calculation is done in Section 4 below, Proposition \ref{PropCylinder}.
The second term is
$O(\eps^\infty)$. Indeed, all the local terms in the local asymptotic
expansion of $\Tr\bigl( \lambda_1 \tg^{-1}[\tD_{\tF}, \tg] e^{-\eps \tD_{\tF}(u)^2}
\mu_1\bigr)$ at the interior points vanish since
$\tg^{-1}[\tD, \tg] e^{-\eps \tD(u)^2}$ is an odd operator with respect to the
$\mathbb{Z}_2$ grading of $\cE$.
In conclusion applying Getzler's formula \eqref{Getzler} we get
\begin{equation*}\begin{split}
\SF(\tD_{\tF}, \tg)
 &=\sqrt{\frac{\eps}{\pi}} \int \limits_0^1 \Tr\bigl(
 \tg^{-1}[\tD_{\tF}, \tg] e^{-\eps \tD_{\tF}(u)^2}\bigr) \ du \\
 &=
   \frac{1}{2} \bigl( \SF(B^+, g) -\SF (B^-, g)\bigr) +O(\eps^\infty),
  \end{split}
\end{equation*}
when $\eps \to 0$. The statement follows.
\end{proof}

\begin{cor}[Cobordism invariance of spectral flow]
Let $g \in C^\infty(M, U(n))$. Then $\SF(D^\pl, g|_{\pl M})=0$.
\end{cor}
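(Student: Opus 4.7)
The plan is to apply Theorem~\ref{ThmSF1} with the positive boundary endomorphism $F = I$ and then to show, by a direct invertibility argument, that the resulting spectral flow on $M$ vanishes. With $F = I$ (tensored by $\id_{\C^n}$) the spectral projection $P^+$ is the identity and $P^- = 0$, so $\cF^+ = \cE^0|_{\pl M}$, $\cF^- = 0$, and consequently $B^+ = D^\pl$ while $B^-$ acts on the zero bundle. Theorem~\ref{ThmSF1} therefore reads
\[
\SF(D_I, g) \;=\; \tfrac{1}{2}\,\SF(D^\pl, g|_{\pl M}),
\]
so the corollary is reduced to the single assertion $\SF(D_I, g) = 0$.

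To establish this, I would show that the whole straight-line path $D_I(u) = D_I + u\,g^{-1}[D,g]$, $u \in [0,1]$, consists of invertible operators. Set $A := g^{-1}[D,g] = \sfc(g^{-1}\,dg)$: since $g^{-1}\,dg$ is $\mathfrak{u}(n)$-valued and Clifford multiplication by a 1-form is skew-adjoint and odd, $A$ is a self-adjoint, odd endomorphism of $\cE\otimes\C^n$. Hence $D_I(u) = (D + uA)_I$ is a boundary value problem of exactly the type covered by Remark~\ref{rem2}, and I would run the kernel-vanishing argument of Proposition~\ref{PropF}(c) verbatim with $D$ replaced by $D+uA$: Green's formula applied to $s = f^0 \oplus 0$, $t = 0 \oplus f^1$ yields the very same identity $\langle f^0, F f^0\rangle|_{\pl M} = 0$ as in the unperturbed case, because the extra contributions from $uA$ on the left-hand side cancel by the self-adjointness of $A$. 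Positivity of $F$ then forces $f|_{\pl M} = 0$, and \UCP\ for the Dirac-type operator $D+uA$ yields $f \equiv 0$.

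Finally, $u \mapsto D_I(u)$ is graph-continuous with fixed domain and bounded perturbation, so by continuity of the spectrum and compactness of $[0,1]$ the spectral gap of $D_I(u)$ around $0$ is uniformly bounded below. A continuous path of self-adjoint Fredholm operators missing $0$ in its spectrum throughout has zero spectral flow, so $\SF(D_I, g) = 0$ and hence $\SF(D^\pl, g|_{\pl M}) = 0$. The hard part is the invertibility along the whole path; the decisive observation is that self-adjointness of $A$ exactly annihilates the extra Green's-formula contribution, so that positivity of $F$ continues to control $f|_{\pl M}$ just as in the original proof of Proposition~\ref{PropF}(c).
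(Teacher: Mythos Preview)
Your argument is correct and is precisely the paper's first proof: take $F=I$, read off $B^+=D^\pl$, $B^-=0$ so that Theorem~\ref{ThmSF1} gives $\SF(D_I,g)=\tfrac12\SF(D^\pl,g|_{\pl M})$, and then show each $D_I(u)$ is invertible via the Green's-formula/positivity/\UCP\ argument of Proposition~\ref{PropF}(c) applied to $D+uA$ (the paper packages exactly this extension as Remark~\ref{rem2}). One point the paper makes explicit and you leave implicit: the step ``$f|_{\pl M}=0\Rightarrow f\equiv 0$'' via \UCP\ needs $M$ connected (or at least every component to meet $\pl M$); for this reason the paper also supplies a second, \UCP-free proof by perturbing with the grading operator $\gamma$ and checking directly that $D_F(u)+\gamma$ has trivial kernel.
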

\begin{proof} For Dirac operators and connected $M$
 there is a short proof which makes use of the Unique Continuation Property of
 Dirac operators: Set $F=1$. Then $\cF^+= \cE^0|_{\pl M}$, $B^+ =
 D^\pl$ and $\cF^-=0$. By Proposition \ref{PropF}, \ref{PropFc}) and Remark \ref{rem2} all
 the operators $D_F(u) = D_F+ u g^{-1}[D_F, g]$ are invertible and hence
 Theorem \ref{ThmSF1} implies that $\frac{1}{2} \SF(D^\pl, g|_{\pl
 M})=\SF(D_F, g) =0$.

 We give a second proof which does not rely on \UCP\ nor on the connectedness
 of $M$ and hence is valid for general first order $\Z_2$--graded elliptic
 operators. Let $F>0$ or $<0$ and let
 \begin{equation}\label{EqGradingOperator}
  \gamma=\begin{bmatrix} 1 & 0 \\ 0 & -1 \end{bmatrix}
\end{equation}
be the grading operator. By homotopy invariance Prop. \ref{PropSF}
we have $\SF(D_F,g)=\SF(D_F+\gamma,g)$. We claim that $D_F(u)+\gamma$ is
invertible for $0\le u\le 1$ and thus $\SF(D_F+\gamma,g)=0$.

Indeed as in the proof of Prop. \ref{PropF}, \ref{PropFc}) it suffices to show that
$\Ker \left(D_F(u)+\gamma\right)=0$. If $\bigl(D_F(u)+\gamma\bigr)f=0$ one shows as in the
said proof that $f|_{\pl M}=0$; note that $D(u)+\gamma$ satisfies the same
Green's formula \eqref{EqGreen} as $D$. But then
\begin{equation*}\begin{split}
    0 & = \scalar{(D_F(u)+\gamma)f}{(D_F(u)+\gamma)f} \\
      & = \scalar{D_F(u)f}{D_F(u)f}+
          \scalar{\gamma f}{\gamma f}+
          \scalar{D_F(u)f}{\gamma f}+\scalar{\gamma f}{D_F(u)f}.
\end{split}
\end{equation*}
Since $f|_{\pl M}=0$ we infer from Green's formula
\[
    \scalar{D_F(u)f}{\gamma f}+\scalar{\gamma f}{D_F(u)f}
    =
    \scalar{(D(u)\gamma+ \gamma D(u))f}{f}=0,
\]
since $D(u)$ is odd with respect to the grading $\gamma$.
Consequently,
\[ 0=  \scalar{D_F(u)f}{D_F(u)f}+\scalar{\gamma f}{\gamma f} \]
and hence $f=0$.
\end{proof}

\begin{cor}
\begin{equation*}
\SF(B^+, g|_{\pl M})+\SF(B^-, g|_{\pl M})=0.
\end{equation*}
\end{cor}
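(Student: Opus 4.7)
The plan is to deduce this corollary from the preceding cobordism invariance corollary by a short homotopy argument on the closed manifold $\pl M$. The key observation, already noted just after the definition of $B^\pm$, is that $D^\pl - B$ (with $B := B^+ \oplus B^-$ acting on $\cE^0|_{\pl M}$) is a zero-order operator. Combined with the additivity of the spectral flow under direct sums,
\[
\SF(B, g|_{\pl M}) \;=\; \SF(B^+, g|_{\pl M}) + \SF(B^-, g|_{\pl M}),
\]
it suffices to show $\SF(B, g|_{\pl M}) = \SF(D^\pl, g|_{\pl M})$, for then the previous corollary finishes the job.

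To establish this equality, I would introduce the straight-line family
\[
B_s \;=\; B + s (D^\pl - B), \qquad s \in [0,1],
\]
so that $B_0 = B$ and $B_1 = D^\pl$. Since $D^\pl - B$ is a bounded self-adjoint operator, each $B_s$ is a self-adjoint elliptic first-order operator on the closed manifold $\pl M$ with common domain $H^1$, hence has compact resolvent and is Fredholm, and the map $s \mapsto B_s$ is continuous in the norm resolvent (and graph) topology.

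I would then apply the homotopy invariance result, Proposition \ref{PropSF}, with $D_s := B_s$ and the constant family $g_t := g|_{\pl M}$. Multiplication by $g|_{\pl M}$ preserves $H^1(\pl M)$, and
\[
[B_s, g|_{\pl M}] \;=\; [B, g|_{\pl M}] + s\, [D^\pl - B, g|_{\pl M}]
\]
is a bundle endomorphism, hence bounded and $B_s$-compact, and is continuous in $(s,t)$ in the norm topology. Proposition \ref{PropSF} then gives
\[
\SF(B, g|_{\pl M}) \;=\; \SF(D^\pl, g|_{\pl M}) \;=\; 0,
\]
the last equality being the cobordism invariance corollary.

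There is no real obstacle; the argument is routine once one notices that passing from $D^\pl$ to $B^+ \oplus B^-$ is merely a bounded self-adjoint perturbation on a closed manifold, so homotopy invariance of the spectral flow applies immediately. The only thing that requires care is verifying the technical hypotheses of Proposition \ref{PropSF}, but all of these (graph continuity of the path, preservation of domain by $g$, and norm continuity of the commutators) are immediate from the zero-order nature of $D^\pl - B$.
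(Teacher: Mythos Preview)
Your proposal is correct and follows exactly the same route as the paper: additivity gives $\SF(B^+,g|_{\pl M})+\SF(B^-,g|_{\pl M})=\SF(B,g|_{\pl M})$, the bounded perturbation $D^\pl-B$ together with Proposition~\ref{PropSF} gives $\SF(B,g|_{\pl M})=\SF(D^\pl,g|_{\pl M})$, and the cobordism invariance corollary gives $0$. The paper compresses the middle step into a single sentence (``due to the fact that $D^\pl-B$ is bounded''), while you have spelled out the verification of the hypotheses of Proposition~\ref{PropSF}.
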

\begin{proof}
\begin{equation*}
\SF(B^+, g|_{\pl M})+\SF(B^-, g|_{\pl M})= \SF(B , g|_{\pl M})=\SF(D^\pl, g|_{\pl M})=0.
\end{equation*}
The second equality is due to the fact that $D^\pl- B$ is bounded.
\end{proof}

From this we obtain a different form of the Theorem \ref{ThmSF1}:
\begin{thm1} In the notations of the Theorem \ref{ThmSF1} we have 
\begin{equation*}
  \SF(D_F, g) =   \SF(B^+, g|_{\pl M})= -\SF(B^-, g|_{\pl M}).
\end{equation*}
\end{thm1}

Finally, the topological formula for the spectral flow of Dirac operators
yields, up to normalizing conventions
\begin{cor}
\begin{equation*}\SF(D_F, g)= \int \limits_{\pl M} \widehat{A}(\pl M) \Ch(g) \Ch (\cF^+/\mathcal{S}).\end{equation*}
\end{cor}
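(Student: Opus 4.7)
My strategy is to combine the refined reformulation of Theorem~\ref{ThmSF1} stated immediately above with the classical topological index formula for the spectral flow of a Dirac operator on a closed odd-dimensional manifold twisted by a unitary. No new analysis should be needed beyond what already went into the proof of Theorem~\ref{ThmSF1}; the corollary is essentially a translation of an analytic pairing into cohomological data.

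First I would invoke $\SF(D_F,g) = \SF(B^+, g|_{\pl M})$, so that the problem reduces to computing the spectral flow of the selfadjoint Dirac operator $B^+$ acting on the Clifford submodule $\cF^+ \subset \cE^0|_{\pl M}$ over the closed odd-dimensional manifold $\pl M$, along the straight-line path $t \mapsto B^+ + t\, g^{-1}[B^+,g]$.

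Next, I would identify $\SF(B^+, g|_{\pl M})$ with the Toeplitz index $\ind(P^+ g P^+)$, where $P^+$ denotes the nonnegative spectral projection of $B^+$; this is a standard fact recalled, e.g., in \cite[Sec.~3]{L}, and is already mentioned in the Introduction as the motivation for the problem. Equivalently, this is the natural pairing of the $K$-homology class $[B^+] \in K_1(\pl M)$ with the $K$-theory class $[g] \in K^1(\pl M)$. By the Atiyah--Singer index theorem for Toeplitz operators on closed manifolds (Baum--Douglas), this pairing is given topologically by
\begin{equation*}
\int_{\pl M} \widehat{A}(\pl M)\, \Ch(g)\, \Ch(\cF^+/\mathcal{S}),
\end{equation*}
where $\Ch(\cF^+/\mathcal{S})$ is the twisting Chern character of the Clifford module $\cF^+$, i.e., the relative Chern character with respect to the local spinor factor $\mathcal{S}$, which is well-defined even when $\pl M$ is not spin.

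I expect the main obstacle to be purely notational rather than substantive: one needs to match conventions and normalization constants (powers of $2\pi i$, orientation signs, and the precise definition of $\Ch(\cF^+/\mathcal{S})$ for a general Clifford module) between the cited topological formula and the sign convention for spectral flow used in Section~\ref{SSecSF}. The phrase ``up to normalizing conventions'' in the statement indicates that this is exactly the ambiguity the authors choose not to pin down explicitly.
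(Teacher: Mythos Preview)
Your proposal is correct and matches the paper's own reasoning exactly: the paper offers no detailed proof at all, merely the sentence ``Finally, the topological formula for the spectral flow of Dirac operators yields, up to normalizing conventions'' preceding the corollary. You have spelled out precisely what that sentence means---apply the reformulated Theorem~\ref{ThmSF1} to reduce to $\SF(B^+, g|_{\pl M})$ on the closed boundary and then invoke the standard Toeplitz/Atiyah--Singer formula---and you correctly identified the normalization caveat.
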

\section{The heat kernel on a half-cylinder}\label{SecHK} 

In this section we compute the heat kernel for the boundary value problem
\eqref{EqBC} on a half-cylinder where all structures are of product form.
This will allow us to evaluate Getzler's formula explicitly. This will
complete the proof of the main Theorem \ref{ThmSF1}.

Let $N$ be a compact Riemannian manifold and let $B$ be a generalized Dirac
operator in the sense of \cite{bgv}, acting on the sections of the
Hermitian vector bundle $E^0$. That is $B$ is a first order elliptic
differential operator such that the leading symbol of $B^2$ at $\xi\in TM$
equals $g(\xi,\xi)$, such operators are called generalized Laplace operators
in \cite{bgv}. Furthermore, let $E=E^0 \oplus E^0$
and $F \in \End(E^0)$, $F^*=F$. We assume that
\begin{equation}\label{EqFRelations}
F^2=\id \text{ and }[B, F]=0.
\end{equation}

We pull back all structures to the cylinder $N\times[0, \infty)$.
On this manifold consider the operator $D_F$ acting on the sections of $E$
given by the equation
\begin{equation}\label{DB}
D=   \begin{bmatrix}0 &-\frac{d}{dx}+B \\ \frac{d}{dx}+B &0 \end{bmatrix}
\end{equation}
  with the boundary conditions
\begin{equation}\label{EqBC2}
 \bigsetdef{ f^0 \oplus f^1 \in \Gamma^\infty(E)}{ f^1|_{\pl M} =   F f^0|_{\pl M} }.
\end{equation}
$f= f^0\oplus f^1\in \Dom(D_F^2)$ if and only if $f\in\Dom(D_F)^2$ and
$D_Ff\in\Dom(D_F)$; this is equivalent to
\begin{equation}\label{EqBCLaplace}
 f^1|_{x=0} = F f^0|_{x=0} \quad\text{ and }\quad  
      \Bigl(\frac{d}{dx}f^0 +F\frac{d}{dx} f^1\Bigr)\Big|_{x=0} =0 .
\end{equation}

Let $U$ be the unitary automorphism of $E$ given by
\begin{equation}\label{EqU}
  U = \frac{1}{\sqrt{2}}\begin{bmatrix} F &-1\\ 1 &F \end{bmatrix}, \quad
   U^{-1}=\frac{1}{\sqrt{2}} \begin{bmatrix} F &1\\ -1 &F \end{bmatrix}.
 \end{equation}
We note the relations
\begin{equation}\label{EqUComm}
 U D U\ii = \begin{bmatrix} -FB & -\frac{d}{dx}\\ \frac{d}{dx} & FB
 \end{bmatrix},\quad\text{and}\quad U D^2 U\ii =D^2.
\end{equation}
Thus $U$ commutes with the differential operator $D^2$; however it maps $\Dom(D_F)$
bijectively onto $\Dom(D_m^2)$. Here, $D_m^2$ is the square of the operator given by the
formula \eqref{DB} with the boundary conditions
\begin{equation*}
\Dom(D_m^2) =
\bigsetdef{f =f^0 \oplus f^1 \in \Gamma^\infty(E) }{ f^0|_{x=0} = 0,\ \frac{d}{dx} f^1|_{x=0}=0}.
\end{equation*}
Thus $U D_F^2 U^{-1} =D_m^2$.

Let $k^\pm_\eps$ be the integral operators on the sections of $E$ with the
kernels given respectively by
\begin{equation*}
 k^\pm_\eps (x, y) = \frac{1}{\sqrt{4 \pi \eps}} ( e^{-\frac{(x-y)^2}{4 \eps}} \pm
          e^{-\frac{(x+y)^2}{4 \eps}}) \id, \quad x, y \in [0, \infty).
\end{equation*}
Then
\begin{equation}\label{EqHK}
e^{-\eps D_m^2} = \begin{bmatrix} k^-_\eps & 0\\ 0 &k^+_\eps \end{bmatrix}
 e^{-\eps B^2}.
\end{equation}

We now recall the notation of the beginning of Section \ref{SSMainResults}.
Let $g \in C^\infty(N, U(n))$, extended to $N \times [0, \infty)$ as being
independent from $x$.  Choose cut-off functions $\lambda, \mu \in
C^\infty_0([0, \infty))$ which are equal to $1$ in a neighborhood of $0$.  As
before we put $D_F(u) = D_F+u g^{-1}[D_F, g]$, where by slight abuse of
notation, we write $D_F$ for $D_F \otimes \id$. Since $B$ commutes with $F$
it preserves the sections of the $\pm 1$ eigenbundles of $F$; the restriction
of $B$ to the $\pm 1$ eigenbundle of $F$ is denoted by $B^\pm$.
\begin{prop}\label{PropCylinder} With the notation introduced before we have
\begin{multline*}
    \sqrt{\frac{\eps}{\pi}} \int \limits_0^1
\Tr\bigl( \lambda g^{-1}[D_F, g] e^{-\eps D_F(u)^2} \mu\bigr)\ du \\
= \frac{1}{2} ( \SF(B^+, g) -\SF (B^-, g)) +O(\eps^\infty)
\end{multline*}
as $\eps \to 0$.
\end{prop}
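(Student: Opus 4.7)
The plan is to use the unitary $U$ of \eqref{EqU} to separate variables on the product cylinder, thereby reducing the half-cylinder trace to Getzler's formula \eqref{Getzler} on the closed manifold $N$. The advantage of $U$ is that \eqref{EqHK} gives a completely explicit product formula for $e^{-\eps D_m^2}$; once conjugation by $U$ is shown to preserve the block structure of the full integrand, the trace will factor as a one-dimensional $x$-integral against a standard heat trace on $N$, and the $\pm 1$-spectral decomposition of $F$ will produce the two spectral flows on the boundary.

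The first step is to note that $g$ is independent of $x$ on the cylinder and commutes with $F$ (which acts on the $E^0$-factor, while $g$ acts on $\mathbb{C}^n$), so that $[D,g]=\begin{bmatrix}0 & [B,g] \\ [B,g] & 0\end{bmatrix}$. A short calculation using $F^2=\id$ and $[B,F]=0$ then shows that $U[D,g]U\ii$ is block-diagonal with entries $\mp[BF,g]$. Since $B$ preserves the splitting $E^0=E^+\oplus E^-$ into $\pm 1$-eigenspaces of $F$, and $BF$ acts there as $\pm B^\pm$, the whole problem decouples across $E^+\oplus E^+$ and $E^-\oplus E^-$. Within each, \eqref{EqUComm} identifies $UD_F(u)^2U\ii$ with $B^\pm(u)^2-d^2/dx^2$ acting diagonally on the two components, with $B^\pm(u)=B^\pm+ug\ii[B^\pm,g]$ on $N$, subject to the mixed Dirichlet/Neumann boundary conditions captured by \eqref{EqHK}.

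Inserting $U\ii U$ into the trace integrand and using that $\lambda$, $\mu$, and $g$ all commute with $U$, each block contribution factors as $\Tr_x$ on $L^2([0,\infty))$ against $\Tr_N$ on sections over $N$. Collecting signs, the $E^+$ block yields $+\bigl(\Tr_x(\lambda k_\eps^+\mu)-\Tr_x(\lambda k_\eps^-\mu)\bigr)\cdot\Tr_N(g\ii[B^+,g]e^{-\eps B^+(u)^2})$, while the $E^-$ block yields the same $x$-difference with opposite sign and with $B^-$. An elementary Gaussian estimate, exploiting that $\lambda\mu\equiv 1$ near $0$ together with $\int_0^\infty e^{-x^2/\eps}\,dx=\tfrac{1}{2}\sqrt{\pi\eps}$, reduces the $x$-difference to $\tfrac{1}{2}+O(\eps^\infty)$. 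Multiplying by $\sqrt{\eps/\pi}$ and integrating over $u\in[0,1]$, Getzler's formula \eqref{Getzler} applied on the closed manifold $N$ to $B^\pm$ finally yields $\tfrac{1}{2}\SF(B^+,g)-\tfrac{1}{2}\SF(B^-,g)$ modulo $O(\eps^\infty)$.

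The main obstacle is combinatorial: correctly pairing the $\pm 1$-eigenspace decomposition of $F$ with the Dirichlet/Neumann asymmetry of \eqref{EqHK} and with the sign pattern of $U[D,g]U\ii$, so that the two blocks assemble with the correct relative sign rather than cancelling or combining into a single spectral flow. Once that bookkeeping is in place, the only genuine analytic inputs are the elementary Gaussian identity above and the standard off-diagonal Gaussian tail bound that makes the $(1-\lambda\mu)$ remainder $O(\eps^\infty)$.
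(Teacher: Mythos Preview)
Your proposal is correct and follows essentially the same route as the paper: conjugate by the unitary $U$, observe that $U[D,g]U^{-1}$ is block-diagonal with entries $\mp F[B,g]$, use the product formula \eqref{EqHK} for the conjugated heat kernel, factor the trace into an $N$-part and an $[0,\infty)$-part, evaluate the Gaussian $x$-integral as $\tfrac12+O(\eps^\infty)$, and finish with Getzler's formula on $N$. The only cosmetic difference is that you perform the $\pm1$-eigenspace decomposition of $F$ before factoring the trace, whereas the paper keeps the factor $F$ in $\Tr_N\bigl(Fg^{-1}[B,g]e^{-\eps B(u)^2}\bigr)$ and splits it into $B^\pm$ contributions only at the very end; either order gives the same bookkeeping.
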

\begin{proof}
Let $B(u)= B+ ug^{-1}[B, g]$, and similarly  $B^\pm(u)= B^\pm+ ug^{-1}[B^\pm, g]$.
By the conjugation invariance of the trace,  \eqref{EqHK}, \eqref{EqUComm} and
\[
U [D_F,g] U\ii = \begin{bmatrix} - F [B,g] & 0 \\ 0 &F[B,g] \end{bmatrix}
\]
we have
\begin{align}
 \Tr&\bigl( \gl g\ii [D_F,g] e^{-\eps D_F(u)^2} \mu  \bigr) \nonumber\\
 &=  \Tr\bigl( \gl g\ii \begin{bmatrix} -F [B,g] & 0 \\ 0 & F[B,g] \end{bmatrix}
     \begin{bmatrix} k_\eps^- & 0 \\ 0 & k_\eps^+ \end{bmatrix} e^{-\eps
      B(u)^2} \mu  \bigr) \nonumber\\
 &= \Tr\bigl( F g\ii [B,g] e^{-\eps B(u)^2} \gl (k_\eps^--k_\eps^+)\mu  \bigr) \nonumber\\
 &= \Tr_N\bigl( F g\ii [B,g] e^{-\eps B(u)^2}\bigr) \times
      \Tr_{[0,\infty)}\bigl(\gl
      (k_\eps^--k_\eps^+)\mu\bigr).\label{EqTraceProd}
\end{align}
Here $\Tr_N$ indicates that the trace is taken of an operator acting on the
bundle on the boundary and $\Tr_{[0,\infty)}$ indicates the trace of an
operator acting on $L^2([0,\infty))$. For the two traces in
\eqref{EqTraceProd} we find
\begin{equation*}
      \Tr_{[0,\infty)}\bigl(\gl (k_\eps^--k_\eps^+)\mu\bigr)
        =\int_0^\infty \frac{1}{\sqrt{\pi \eps}} e^{-x^2/\eps} \ dx
        = \frac 12 + O(\eps^\infty),
\end{equation*}
and
\begin{equation*}\begin{split}
\Tr_N&\bigl( F g\ii [B,g] e^{-\eps B(u)^2}\bigr)\\
     & =  \Tr\bigl( g^{-1} [B^+, g]e^{-\eps B^+(u)^2}\bigr)
        - \Tr\bigl( g^{-1} [B^-, g]e^{-\eps B^-(u)^2}\bigr).
\end{split}\end{equation*}
The result then follows from Theorem \ref{ThmGetzler}.
\end{proof}
\bibliographystyle{amsalpha-lmp}
\bibliography{biblio}

\end{document}